\newtheorem{thm}{Theorem}[section]
\newtheorem{lmm}[thm]{Lemma}
\newtheorem{prp}[thm]{Proposition}
\theoremstyle{definition}
\newtheorem{dfn}[thm]{Definition}
\theoremstyle{remark}
\newtheorem*{rem}{Remark}
\begin{document}
\title{On commensurability of quadratic differentials}

\author{Hidetoshi Masai}
\address{Mathematical Science Group of Advanced Institute for Materials Research, Tohoku University, 2-1-1, Katahira, Aoba-ku, Sendai, 980-8577, Japan}
\email{masai@tohoku.ac.jp}
\thanks{To appear in RIMS K\^oky\^uroku Bessatsu. This is a preprint version. This paper was written and accepted while the author was in University of Tokyo.}
%

\maketitle

\begin{abstract}      
We consider commensurability of quadratic differentials on surfaces.
Each commensurability class has a natural order by the covering relation.
We show that each commensurability class contains a unique (orbifold) element.
We also discuss the relationship between commensurability of quadratic differentials and fibered commensurability, a notion introduced by Calegari-Sun-Wang.
\end{abstract}

\section{preface}
Let $S$ be an orientable surface of finite type and $X$ a Riemann surface of finite analytic type which is homeomorphic to $S$.
Throughout the paper, surfaces considered are orientable and of finite type with $3g(S)-3+p(S)>0$ 
where $g(S)$ is the genus and $p(S)$ is the number of punctures.
A (holomorphic) {\em quadratic differential} $q = \{(U_{j},z_{j}),q_{j}\}_{j}$ 
is a family of holomorphic functions $\{q_{j}\}_{j}$, one defined on each local chart $(U_{j},z_{j})$ of $X$ so that if
$U_{i}\cap U_{j}\not=\emptyset$, then $q_{j}(z_{j}) = q_{i}(z_{i})(dz_{i}/dz_{j})^{2}$.
Quadratic differentials are arrowed to have poles of degree one at the punctures.
A quadratic differential on a topological surface $S$ is 
a quadratic differential with respect to some complex structure on $S$.
For a given homeomorphism $f:S\rightarrow S$ and a quadratic differential $q=\{(U_{j},z_{j}),q_{j}\}_{j}$, 
we define push-forward by 
$$f_{*}(q) := \{(f(U_{j}), z_{j}\circ f^{-1}),q_{j}\}_{j}.$$
Given a quadratic differential $q$, by integrating the square root of $q$, we get natural coordinates around nonzero points of $q$.
This gives a singular Euclidean structure on $S$.
Furthermore $q$ has natural horizontal (resp. vertical) directions which defined to be $v \in T_{z}X$ so that $q(z)v^{2}$ is a positive (resp. negative) real number.
In this paper, coverings considered are assumed to be unramified.
Let $p:\widetilde S\rightarrow S$ be a finite covering.
Since each quadratic differential is defined on charts, we can naturally lift quadratic differentials via any finite covering.
For a quadratic differential $q$ on $S$, its lift with respect to $p:\widetilde S\rightarrow S$ is denoted by $p^{*}q$.
Conversely, a quadratic differential $\widetilde{q}$ on a Riemann surface $\widetilde S$
is said to be {\em symmetric} with respect to $p:\widetilde S\rightarrow S$ if 
$\widetilde{q}$ is a lift of a quadratic differential $q$ on $S$.
If $\widetilde{q}$ is symmetric, 
we use the notation of push-forward for coverings, i.e. $q = p_{*}(\widetilde{q})$.
Note that if $\widetilde{q}$ is symmetric, then every quadratic differential on the Teichm\"uller geodesic determined by $\widetilde{q}$ is symmetric (c.f. \cite{GM}).
\begin{dfn}
Let $q_{1}$ and $q_{2}$ be quadratic differentials on surfaces $S_{1}$ and $S_{2}$ respectively.
The quadratic differential $q_{1}$ is said to {\em cover} $q_{2}$ if there exists a finite covering $p:S_{1} \rightarrow S_{2}$
so that $p^{*}q_{2} = q_{1}$.
\end{dfn}
Once we have a covering relation, we may consider commensurability.	
\begin{dfn}
Two quadratic differentials $q_{1}$ and $q_{2}$ on surfaces $S_{1}$ and $S_{2}$ respectively are {\em commensurable} (denoted $q_1\sim q_2$)
if there is a third quadratic differential $\widetilde{q}$ that covers both $q_{1}$ and $q_{2}$.
\end{dfn}
We also need to consider orbifolds.
On a 2-dimensional orientable orbifold, quadratic differentials are defined on the surface that we get by puncturing every orbifold point. 
When we take coverings, we fill every puncture corresponding to an orbifold point once it is locally covered by a surface.
It can be seen that this commensurability is an equivalence relation (Proposition \ref{prop.equiv}).
Two quadratic differentials $q_{1},q_{2}$ are said to be {\em conjugate} if there exists a surface homeomorphism (or a orbifold automorphism) $f$
such that $f_{*}q_{1} = q_{2}$.
By considering conjugacy classes, we endow each commensurability class with an order by the covering relation.
The main theorem of this paper is the following.
\begin{thm}\label{thm.main}
Every commensurability class of quadratic differentials contains a unique minimal (orbifold) element.
\end{thm}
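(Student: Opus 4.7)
The plan is to realize the minimal element as a canonical orbifold quotient of the universal cover by a group intrinsic to the commensurability class. Fix any representative $q$ on $S$ in the class and let $\pi:\widetilde{S}\to S$ be the universal cover, with $\widetilde{q}=\pi^{*}q$. The group $\pi_{1}(S)$ acts on $(\widetilde{S},\widetilde{q})$ by $\widetilde{q}$-preserving homeomorphisms. Inside the group $\mathrm{Sym}(\widetilde{q})$ of all such homeomorphisms, define
$$\Gamma:=\bigl\{\varphi\in \mathrm{Sym}(\widetilde{q}):\varphi\,\pi_{1}(S)\,\varphi^{-1}\cap\pi_{1}(S)\text{ has finite index in both}\bigr\},$$
the commensurator of $\pi_{1}(S)$ in $\mathrm{Sym}(\widetilde{q})$. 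The central observation is that for any $q'\sim q$, the definition of commensurability supplies a common cover, which identifies $\widetilde{S}$ with the universal cover of $S'$ so that $\pi_{1}(S')\subset\mathrm{Sym}(\widetilde{q})$ shares the finite-index subgroup $\pi_{1}$ of the common cover with $\pi_{1}(S)$. Hence $\pi_{1}(S')\subset\Gamma$, so $\Gamma$ depends only on the commensurability class of $q$, not on the chosen representative.

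The remaining argument proceeds in three steps. First, prove $\Gamma$ is discrete and $[\Gamma:\pi_{1}(S)]<\infty$, so that $q_{\min}:=\widetilde{q}/\Gamma$ is a well-defined orbifold quadratic differential of finite area. Second, verify minimality: for each $q'\sim q$, the inclusion $\pi_{1}(S')\subset\Gamma$ gives an orbifold covering $q'=\widetilde{q}/\pi_{1}(S')\to\widetilde{q}/\Gamma=q_{\min}$, so every member of the class covers $q_{\min}$. Third, uniqueness is automatic: any other minimal element $\widehat{q}$ is both covered by $q_{\min}$ and covers $q_{\min}$, and by multiplicativity of degrees in area both coverings have degree one, hence are orbifold isomorphisms of quadratic differentials.

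The main obstacle is the first step. Discreteness relies on the rigidity of the singular flat structure on $\widetilde{S}$: under the hypothesis $3g-3+p>0$ the differential $q$ necessarily has at least one zero or pole, so the singular set of $\widetilde{q}$ is a nonempty discrete subset of $\widetilde{S}$, and any $\widetilde{q}$-isometry is determined, up to a finite local rotational ambiguity, by its action in a neighborhood of a single singular point. Finite index of $\pi_{1}(S)$ in $\Gamma$ is then a Hurwitz-type bound: each coset of $\pi_{1}(S)$ in $\Gamma$ descends to an orbifold automorphism of some common finite orbifold cover of $S$ on which the representative pair $(\pi_{1}(S),\varphi\pi_{1}(S)\varphi^{-1})$ is realized, and the total number of such automorphisms is controlled by $|\mathrm{Aut}(X)|$ for the underlying hyperbolic Riemann surface $X$, which is finite. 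Once discreteness and finite index are secured, the remaining steps are formal consequences of the construction.
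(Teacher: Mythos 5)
Your approach is genuinely different from the paper's. The paper works on a common finite cover $S_{1,2}$ of $S_1$ and $S_2$: it defines an equivalence relation on $S_{1,2}$ by alternately saturating under $p_1^{-1}p_1$ and $p_2^{-1}p_2$, uses the singular flat structure to show each class is finite (classes of singular points stay inside the finite set $\mathrm{Sing}(q_{1,2})$; a regular point is pinned down by a geodesic of prescribed length and angle to a singular point), takes the quotient orbifold, and then reaches the minimum by an induction on Euler characteristic. You instead build the minimum in one shot as $\widetilde{q}/\Gamma$, where $\Gamma$ is the commensurator of $\pi_1(S)$ inside $\mathrm{Sym}(\widetilde{q})$. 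Your route buys a cleaner global statement (the minimal element is visibly a minimum, so uniqueness is immediate and no induction is needed), while the paper's is more elementary and stays entirely at the level of finite covers. Both arguments ultimately rest on the same rigidity: an element of $\mathrm{Sym}(\widetilde{q})$ is determined by its germ, and the singular/marked set is discrete and nonempty. One small slip: in the once-punctured torus case $q$ may have neither zeros nor poles, so you must (as the paper does) count the punctures themselves among the singular points to get a nonempty discrete invariant set.

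The step that does not work as written is your justification of $[\Gamma:\pi_1(S)]<\infty$ via $|\mathrm{Aut}(X)|$. A coset representative $\varphi$ of $\pi_1(S)$ in $\Gamma$ does not descend to an automorphism of $X$ unless it normalizes $\pi_1(S)$; it only descends to an isomorphism between two finite covers of $X$ that depend on $\varphi$, so no single finite automorphism group bounds the number of cosets. Indeed, this is exactly the mechanism by which commensurators of arithmetic Fuchsian groups become dense, so a Hurwitz-type bound cannot be the right reason. What saves you is the discreteness you already established: fix a singular point $s$; then $\Gamma\cdot s\subset \mathrm{Sing}(\widetilde{q})$, the set $\mathrm{Sing}(\widetilde{q})$ consists of finitely many $\pi_1(S)$-orbits (namely $|\mathrm{Sing}(q)|$ of them), and the stabilizer of $s$ in $\mathrm{Sym}(\widetilde{q})$ is a finite rotation group. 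Hence $\Gamma$ is covered by at most $|\mathrm{Sing}(q)|\cdot|\mathrm{Stab}_\Gamma(s)|$ cosets of $\pi_1(S)$. (Equivalently, $\Gamma$ acts properly discontinuously, so $\widetilde{q}/\Gamma$ has positive area and multiplicativity of area forces finite index.) With that replacement, and with the uniqueness step phrased as ``a minimal $\widehat{q}$ covers $q_{\min}$, and minimality forces that covering to have degree one,'' your argument goes through.
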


\begin{rem}
Theorem \ref{thm.main} can also be verified by the identity theorem if there is a zero of the quadratic differentials in the interior of the surface.
The argument in this paper works even for the case where all the zeros are punctured.
\end{rem}

We also consider fibered commensurability and its relationship with the commensurability of quadratic differentials.
In \cite{CSW}, Calegari-Sun-Wang introduced following commensurability on surface automorphisms.
Let $\mathrm{Mod}(S)$ denote the mapping class group of $S$.

\begin{dfn}[\cite{CSW}]
Let $S_1$ and $S_2$ be orientable surfaces of finite type.
A mapping class $\phi_1\in\mathrm{Mod}(S_1)$ {\em covers} $\phi_2\in\mathrm{Mod}(S_2)$ if
there exists a finite covering $p:S_1\rightarrow S_2$ and 
$k\in\mathbb{Z}\setminus\{0\}$ such that a lift $\varphi$ of $\phi_2^{k}$ with respect to $p$ satisfies 
$\varphi = \phi_1$.
Two mapping classes are said to be {\em commensurable}
if there exists a mapping class that covers both.
\end{dfn}

We call an orbifold automorphism psuedo-Anosov if it can be covered by a psuedo-Anosov mapping class on a surface.
In \cite{CSW} and \cite{Masa}, the following theorem was shown.
\begin{thm}\label{thm.fibered}
Every fibered commensurability class of pseudo-Anosov mapping classes contains a unique minimal (orbifold) element.
\end{thm}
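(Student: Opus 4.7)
The plan is to deduce Theorem~\ref{thm.fibered} from Theorem~\ref{thm.main} by translating fibered commensurability of pseudo-Anosov mapping classes into commensurability of their invariant quadratic differentials.

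To each pseudo-Anosov $\phi\in\mathrm{Mod}(S)$ one associates a canonical (up to positive scaling) invariant quadratic differential $q_\phi$, and $q_{\phi^k}=q_\phi$ up to scaling for every $k\neq 0$. I would first verify that fibered commensurability of mapping classes is equivalent to commensurability of these invariant quadratic differentials. If $\phi_1\in\mathrm{Mod}(S_1)$ covers $\phi_2^k\in\mathrm{Mod}(S_2)$ via $p:S_1\rightarrow S_2$, then $p^*q_{\phi_2}$ is preserved by the lift $\varphi=\phi_1$, and since $\phi_1$ is pseudo-Anosov this lift must coincide with $q_{\phi_1}$ up to scaling; after normalizing, $p^*q_{\phi_2}=q_{\phi_1}$. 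Thus a common cover of pseudo-Anosov mapping classes yields a common cover of the associated quadratic differentials. Conversely, given a common cover $\widetilde q$ covering both $q_{\phi_1}$ and $q_{\phi_2}$, suitable powers of $\phi_1$ and $\phi_2$ lift to pseudo-Anosov maps on the covering surface preserving $\widetilde q$; since maps sharing a pair of invariant measured foliations on a connected surface are powers of a common primitive pseudo-Anosov, one obtains a mapping class covering appropriate powers of both $\phi_1$ and $\phi_2$.

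Next, applying Theorem~\ref{thm.main} to the commensurability class of $q_\phi$ produces a unique minimal orbifold quadratic differential $q_{\min}$ on an orbifold $\mathcal{O}$, together with a finite orbifold covering $p:S\rightarrow\mathcal{O}$ such that $p^*q_{\min}=q_\phi$. To produce the corresponding minimal orbifold pseudo-Anosov, I would argue that the deck group of $p$ is finite and $\phi$ permutes the finitely many intermediate orbifold coverings between $S$ and $\mathcal{O}$, so after replacing $\phi$ by a suitable power we may assume $\phi$ normalizes the deck group and therefore descends to an orbifold automorphism of $\mathcal{O}$ that preserves $q_{\min}$. This descended map is pseudo-Anosov in the sense of the excerpt because it is covered by a pseudo-Anosov mapping class. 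Taking the primitive root inside the affine stabilizer of $(\mathcal{O},q_{\min})$ then yields an orbifold pseudo-Anosov $\bar\phi$ whose conjugacy class depends only on the commensurability class of $\phi$, and its minimality together with uniqueness transfers directly from the corresponding property of $q_{\min}$ provided by Theorem~\ref{thm.main}.

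The main obstacle is this descent step. One must show that some definite power of $\phi$ actually descends to $\mathcal{O}$ (a group-theoretic question about the normalizer of $p_*\pi_1(S)$ inside $\pi_1^{\mathrm{orb}}(\mathcal{O})$) and that the resulting primitive orbifold pseudo-Anosov is canonical. I expect this to rely on analyzing the affine (Veech) group of $(\mathcal{O},q_{\min})$ and observing that the hyperbolic elements preserving a given pair of invariant foliations form a virtually cyclic subgroup, which singles out a unique primitive element independent of the representative $\phi$; this element is the minimal orbifold pseudo-Anosov, and its uniqueness follows from the uniqueness part of Theorem~\ref{thm.main}.
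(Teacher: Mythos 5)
Your overall strategy---pass to the invariant quadratic differentials, apply Theorem~\ref{thm.main} to obtain the minimal orbifold differential $q_{\min}$ on $\mathcal{O}$, descend the maps to $\mathcal{O}$, and extract a primitive root---is the same as the paper's. The genuine gap is in the descent step. First, the mechanism you propose is not sound as stated: the covering $p:S\rightarrow\mathcal{O}$ need not be regular, so ``normalizing the deck group'' does not by itself yield descent, and $\phi$ does not a priori act on the lattice of intermediate coverings (i.e.\ on the subgroups of $\pi_1^{\mathrm{orb}}(\mathcal{O})$ containing $p_*\pi_1(S)$) unless it already descends, which is what you are trying to prove. Second, and more importantly, even if that argument worked it would only show that some \emph{power} $\phi^m$ descends to $\mathcal{O}$. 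Under the definition of covering stated in the paper, $\phi$ covers $\bar\phi$ only if $\phi$ \emph{itself} is a lift of some power $\bar\phi^{k}$; a power is permitted on the covered side, not on the covering side. Knowing that $\phi^m$ is a lift of $\bar\phi$ does not give this, so your candidate minimal element would only be covered by $\phi^m$, which does not prove the theorem.

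The paper closes exactly this gap with Lemma~\ref{lem.lift}: since $f_i$ preserves the projective classes of the horizontal and vertical foliations of $q_i$, every $(f_i^{n})_*q_i$ is symmetric with respect to $S_i\rightarrow S'$, and the equivalence relation generated by $x\sim y$ iff $p\circ f^{n}(x)=p\circ f^{n}(y)$ (whose classes are finite by the same singular Euclidean argument as in Theorem~\ref{thm.main}) produces an intermediate covering to which $f_i$ itself descends; minimality of $q'$ then forces that intermediate orbifold to coincide with $S'$. If you want to keep your outline, you must supply an argument of this kind in place of the deck-group step. The remainder of your sketch (primitive root in the affine stabilizer, uniqueness via the virtually cyclic group of hyperbolic elements fixing the two foliations) corresponds to the paper's gcd-of-stretch-factors argument and is fine at the same level of detail, as is the translation between fibered commensurability and commensurability of the invariant differentials.
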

We give a new proof of the above theorem by using Theorem \ref{thm.main}.

\section{Unique minimal element}
We first observe that commensurability of quadratic differentials is an equivalence relation.
The reflectivity and symmetry are trivial.
\begin{prp}\label{prop.equiv}
Let $q_i$ be a quadratic differential on a surface $S_i$ for $i=1,2,3$.
Suppose $q_1\sim q_2$ and $q_2\sim q_3$.
Then we have $q_1\sim q_3$.
\end{prp}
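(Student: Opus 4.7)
The plan is to build a common cover using a fibre product. By hypothesis we have quadratic differentials $\widetilde q_{12}$ on a surface $\widetilde S_{12}$ and $\widetilde q_{23}$ on a surface $\widetilde S_{23}$, together with finite coverings $p_1\colon \widetilde S_{12}\to S_1$, $p_2\colon \widetilde S_{12}\to S_2$, $p_2'\colon \widetilde S_{23}\to S_2$ and $p_3\colon \widetilde S_{23}\to S_3$ satisfying $p_1^{*}q_1 = \widetilde q_{12} = p_2^{*}q_2$ and $(p_2')^{*}q_2 = \widetilde q_{23} = p_3^{*}q_3$. The only objects linking the two chains are the two coverings of $S_2$, so the natural move is to refine them to a single covering of $S_2$ that dominates both.

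First I would form the fibre product $\widetilde S_{12}\times_{S_2}\widetilde S_{23}$ in the category of finite unramified coverings of $S_2$ and pick a connected component $\widehat S$. This produces finite coverings $r_{12}\colon \widehat S\to \widetilde S_{12}$ and $r_{23}\colon \widehat S\to \widetilde S_{23}$ with $p_2\circ r_{12}=p_2'\circ r_{23}$. Setting $\widehat q := r_{12}^{*}\widetilde q_{12}$, one computes
\[
\widehat q = r_{12}^{*}p_2^{*}q_2 = (p_2\circ r_{12})^{*}q_2 = (p_2'\circ r_{23})^{*}q_2 = r_{23}^{*}(p_2')^{*}q_2 = r_{23}^{*}\widetilde q_{23},
\]
so $\widehat q$ is simultaneously a lift of $\widetilde q_{12}$ and of $\widetilde q_{23}$. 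Composing with the outer coverings, $\widehat q$ covers $q_1$ via $p_1\circ r_{12}\colon \widehat S\to S_1$ and covers $q_3$ via $p_3\circ r_{23}\colon \widehat S\to S_3$, which is exactly what is required for $q_1\sim q_3$.

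The main obstacle is essentially bookkeeping: one must check that the fibre product, after restricting to a single connected component, still provides a genuine finite covering of each factor, and that the same construction works when $S_1, S_2, S_3$ carry orbifold structure. Since the coverings in the paper are assumed unramified, the component $\widehat S$ corresponds to a $\pi_1(S_2)$-orbit on the product of the two fibres, and the projections onto the factors are automatically surjective finite coverings; the orbifold points play no role because they are punctured before taking covers and filled back in afterwards, which is compatible with fibre products. Once this is verified the equalities displayed above carry over verbatim and transitivity follows.
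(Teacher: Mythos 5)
Your proposal is correct and is essentially the paper's own argument in different language: the paper passes to the cover of $S_2$ corresponding to the finite-index subgroup $\pi_1(\widetilde S_{12})\cap\pi_1(\widetilde S_{23})\le\pi_1(S_2)$, which is precisely the (basepoint-compatible) connected component of your fibre product, and then observes exactly your chain of equalities for the lifted differential. No gap; the bookkeeping you flag (components of the fibre product are finite covers of each factor, and orbifold points are handled by puncturing and refilling) is as routine as you suspect.
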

\begin{proof}
Let $q_{j,j+1}$ be a quadratic differential on a surface $S_{j,j+1}$ which covers $q_j$ and $q_{j+1}$ for $j=1,2$.
Then in the orbifold fundamental group $\pi_1(S_2)$, the images of $\pi_1(S_{1,2})$ and $\pi_1(S_{2,3})$ by the covering maps are of finite index.
Hence the intersection $H:= \pi_1(S_{1,2})\cap \pi_1(S_{2,3})$ in $\pi_1(S_2)$ is also a finite index subgroup.
Then the lift $\widetilde{q_2}$ of $q_2$ to the covering corresponding to $H$ is also the lift of $q_{1,2}$ and $q_{2,3}$.
Therefore $\widetilde{q_2}$ covers all $q_1$, $q_2$, and $q_3$, in particular $q_1\sim q_3$.
\end{proof}

The main idea of the following proof of Theorem \ref{thm.main} is from \cite[Lemma 4.11]{Masa2}.
\begin{proof}[Proof of Theorem \ref{thm.main}]
We show that if $q_1\sim q_2$, then both $q_1$ and $q_2$ cover the same quadratic differential $q'$.
Recall that each quadratic differential $q$ determines a singular Euclidean structure with horizontal and vertical foliation.
Let $\mathrm{Sing}(q)$ denote the set of singular points of the singular Euclidean structure.
This $\mathrm{Sing}(q)$ is finite and contains all punctures.
Let $q_{1,2}$ be a quadratic differential on a surface $S_{1,2}$ which covers both $q_1$ and $q_2$ and 
let $p_i:S_{1,2}\rightarrow S_i$  denote the associated covering maps for $i=1,2$.
Pick any $s\in\mathrm{Sing}(q_{1,2})$, then we define $\Sigma_1(s):=p_{1}^{-1}(p_{1}(s))$.
Inductively define $\Sigma_{i+1}(s):= p_{[i+1]}^{-1}p_{[i+1]}(\Sigma_{i})$ where $[k] = 1$ if $k$ is odd and $[k]=2$ if $k$ is even.
Since $\Sigma_i(s)\subset \Sigma_{i+1}(s)\subset \mathrm{Sing}(q_{1,2})$, 
we eventually have $\Sigma_i(s) = \Sigma_{i+1}(s) (=:\Sigma(s))$ for large enough $i$.
Thus we have an equivalence relation on $\mathrm{Sing}(q_{1,2})$.
Next, we pick any $x\in S_{1,2}\setminus \mathrm{Sing}(q_{1,2})$.
There is a point $s'\in\mathrm{Sing}(q_{1,2})$ such that we can connect $x$ and $s'$ by a single Euclidean geodesic $\gamma$.
The geodesic $\gamma$ has well defined angle $\theta_\gamma\mod \pi$.
Let $l_q(\gamma)$ denote the Euclidean length of $\gamma$.
Since there are only finitely many points from $\Sigma(s')$ with angle $\theta_\gamma$ and Euclidean distance $l_q(\gamma)$,
we get $\Sigma(x)\subset S\setminus\mathrm{Sing}(q)$ in the same way as above.
Thus we get an equivalence relation $x\sim y:\iff y\in\Sigma(x)$ on $S_{1,2}$. 
Since this relation is defined by composing local homeomorphisms $p_{1}$ and $p_{2}$, the quotient map $p':S_{1,2} \rightarrow S/{\sim}$ is a covering.
Note that $S/{\sim}$ might be an orbifold.
For each point $x\in S/{\sim}$, we may find a small open neighborhood $U_{x}$ so that on all pre-images in $S_{1,2}$,
the quadratic differentials can be identified via $p_{1}$ and $p_{2}$.
Hence $p'$ determines a quadratic differential $q'$ on $S/{\sim}$.
By construction, $p'$ factors through $p_{i}:S_{1,2} \rightarrow S_{i}$ and hence both $q_{1}$ and $q_{2}$ cover $q'$.
If there is another quadratic differential $q_{3}$ in the commensurability class that does not cover $q'$, 
then we may apply the same argument as above to find a quadratic differential which is covered by $q'$ and $q_{3}$.
Since each time we get a new quadratic differential, the Euler characteristic of the underlying orbifold decreases, 
this process would terminate.
Thus we get a unique minimal element.
\end{proof}

\section{Fibered commensurability}
In this section, we give a proof of Theorem \ref{thm.fibered}.
\begin{lmm}\label{lem.lift}
Let $p:\widetilde{S}\rightarrow S$ be a finite covering of orientable 2-orbifolds.
Let $f:\widetilde{S}\rightarrow \widetilde{S}$ be an orbifold automorphism.
If there is a quadratic differential $q$ such that $f^{n}_{*}(q)$ is symmetric with respect to 
$p:\widetilde{S}\rightarrow S$ for each $n\in\mathbb{Z}$.
Then there is a finite covering $p':\widetilde{S}\rightarrow S'$ which factors through $p$ such that
$f$ is a lift of some homeomorphism $f':S'\rightarrow S'$.
\end{lmm}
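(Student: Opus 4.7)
The plan is to exploit the hypothesis to produce a finite subgroup $\Gamma'\subseteq \mathrm{Aut}(\widetilde{S})$ that contains the deck group of $p$ and is normalized by $f$; then the orbifold quotient $S':=\widetilde{S}/\Gamma'$ will serve as the desired intermediate orbifold, and the descent of $f$ to an automorphism $f':S'\to S'$ will follow from this normalization.

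The key finiteness input is that the group $G:=\{g\in\mathrm{Aut}(\widetilde{S}):g_{*}q=q\}$ is finite: the quadratic differential $q$ endows $\widetilde{S}$ with a singular Euclidean structure, and any $g\in G$ is an isometry of this rigid flat structure; on a surface of negative orbifold Euler characteristic the isometry group of such a structure is finite. Writing $\Gamma:=\mathrm{Deck}(p)$, the symmetry $q=p^{*}q_{0}$ implies $g_{*}q=q$ for every $g\in\Gamma$, so $\Gamma\subseteq G$. Applying the same observation to the symmetric quadratic differential $f^{n}_{*}q$ gives $\Gamma\subseteq\mathrm{Aut}(f^{n}_{*}q)$, and rewriting $g_{*}(f^{n}_{*}q)=f^{n}_{*}q$ as $(f^{-n}gf^{n})_{*}q=q$ yields $f^{-n}\Gamma f^{n}\subseteq G$ for every $n\in\mathbb{Z}$. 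Hence
\[
\Gamma':=\langle f^{-n}\Gamma f^{n}:n\in\mathbb{Z}\rangle\subseteq G
\]
is a finite subgroup of $\mathrm{Aut}(\widetilde{S})$ containing $\Gamma$, and conjugation by $f$ merely reindexes its generators, so $f\Gamma'f^{-1}=\Gamma'$.

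Finally, set $S':=\widetilde{S}/\Gamma'$ with orbifold projection $p':\widetilde{S}\to S'$. Since $\Gamma\subseteq\Gamma'$, the covering $p'$ factors through $p$, and since $f$ normalizes $\Gamma'$, it descends to an orbifold automorphism $f':S'\to S'$ satisfying $p'\circ f=f'\circ p'$—that is, $f$ is the lift of $f'$ via $p'$. The main obstacle I foresee is the non-Galois case of $p$: then $\widetilde{S}/\Gamma$ is a strict intermediate Galois cover rather than $S$ itself, and the factorization of $p'$ through $p$ requires further justification, as $p$-fibers are strictly larger than $\Gamma$-orbits. This can be finessed by passing first to an intermediate Galois cover to which $f$ lifts—such a cover exists because $\bigcap_{n\in\mathbb{Z}} f_{*}^{n}(\pi_{1}(\hat{S}))$ is a finite-index $f_{*}$-invariant subgroup of $\pi_{1}(\widetilde{S})$ (only finitely many finite-index subgroups of a given index exist)—and then applying the Galois argument above.
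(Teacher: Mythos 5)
Your argument is a genuinely different route from the paper's: the paper forms the equivalence relation on points of $\widetilde{S}$ generated by the fibers of all the maps $p\circ f^{n}$ and uses the flat structure of $q$ (as in the proof of Theorem \ref{thm.main}) to show the classes are finite and hence define a covering $p'$ to which $f$ descends. Your version instead packages everything into the finite group $G=\{g: g_{*}q=q\}$ and the conjugates $f^{-n}\Gamma f^{n}$ of the deck group. In the Galois case your argument is clean and complete: $\Gamma\subseteq G$, $f^{-n}\Gamma f^{n}\subseteq G$ for all $n$, so $\Gamma'=\langle f^{-n}\Gamma f^{n}\rangle$ is finite, contains $\Gamma$, and is normalized by $f$, giving $S'=\widetilde{S}/\Gamma'$ with the required properties.

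However, the non-Galois case, which you correctly flag, is a genuine gap, and your proposed fix does not close it. If $p$ is not regular, $\mathrm{Deck}(p)$ can be trivial (e.g.\ any degree-$3$ irregular cover), in which case your $\Gamma'$ is trivial and $S'=\widetilde{S}$, so $p'$ does not factor through $p$; the whole content of the lemma lies in merging the $p$-fibers, and your construction never sees them. The fix you sketch goes in the wrong direction: $\bigcap_{n}f_{*}^{n}(\pi_{1}(\hat{S}))$ is indeed a finite-index $f_{*}$-invariant subgroup of $\pi_{1}(\widetilde{S})$, but it corresponds to a cover lying \emph{above} $\widetilde{S}$, and to run your Galois argument on it you would need that cover to be regular over $S$, i.e.\ you would need a finite-index subgroup of $\pi_{1}(\widetilde{S})$ that is simultaneously $f_{*}$-invariant and normal in $\pi_{1}(S)$. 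Your intersection is $f_{*}$-invariant but there is no reason for it to be normal in $\pi_{1}(S)$ (taking the normal core destroys $f_{*}$-invariance, and iterating need not terminate), and you give no argument that such a subgroup exists. This is not a technicality one can wave at with pure group theory: the conclusion of the lemma is false for a generic automorphism $f$ of a finite cover of $S$, so any correct proof must invoke the hypothesis on $q$ precisely at the point where the $p$-fibers get merged into $f$-compatible classes --- which is exactly what the paper's point-set argument does and your argument, outside the Galois case, does not.
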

\begin{proof}
Note that all $(p\circ f^{n})_{*}(q)$ are commensurable to each other.
We define $x\sim y$ if there exists $n$ such that $p\circ f^{n}(x)=p\circ f^{n}(y)$.
We take transitive closure of this $\sim$ to define an equivalence relation.
Let $\Sigma(x)$ denote the equivalence class of $x$.
Similarly to the proof of Theorem \ref{thm.main}, we see that $\Sigma(x)$ is finite and defines a covering.
Furthermore, by construction, we see that $f(\Sigma(x)) = \Sigma(f(x))$.
Thus we have a homeomorphism $f':S'\rightarrow S'$ so that $f$ is a lift of $f'$.
\end{proof}
\begin{proof}[Proof of Theorem \ref{thm.fibered}]
For $i = 1,2$, let $f_{i}:S_{i}\rightarrow S_{i}$ be commensurable pseudo-Anosov homeomorphisms.
We will find $f':S'\rightarrow S'$ which is covered both by $f_{1}$ and $f_{2}$.
Note that if two pseudo-Anosov maps are commensurable, 
then they  give commensurable quadratic differentials whose horizontal and vertical foliations are
stable and unstable foliations \cite{GM}.
Let $q_{i}$ be a quadratic differential on $S_{i}$ associated to $f_{i}$ for $i = 1,2$ so that $q_{1}$ and $q_{2}$ are commensurable.
Then by Theorem \ref{thm.main}, there is a unique minimal quadratic differential $q'$ on some orbifold $S'$ in the commensurability class.
Note that for all $i=1,2$ and $n\in\mathbb{Z}$,
$(f_{i}^{n})_{*}q_{i}$ is also symmetric with respect to the covering from $S_{i}$ to $S'$.
This is because $f_{i}$ preserves projective classes of horizontal and vertical foliations of $q_{i}$.
Hence if $f_{i}$ is not any lift of a homeomorphism on $S'$, we may find further covering from $q'$ by Lemma \ref{lem.lift}.
This contradicts the minimality of $q'$, so $f_{i}$ is a lift of some homeomorphism $f_{i}'$ on $S'$ for both $i=1,2$.
The stable and unstable foliations of $f'_{1}$ and $f'_{2}$ agree.
Note that there is a lower bound for the stretch factor of pseudo-Anosovs.
Therefore there are integers $a_{i}$ such that $a_{1}\log\lambda(f_{1}) = a_{2}\log\lambda(f_{2})$ where $\lambda(f)$ is the stretch factor of $f$.
Let $n_{i}~(i=1,2)$ be integers such that $n_{1}a_{1} + n_{2}a_{2}$ is the greatest common divisor of $a_{1}$ and $a_{2}$.
Then by considering the stretch factor, we see that $f':=(f_{1}')^{n_{1}}\circ (f_{2}')^{n_{2}}$ satisfies
that there are integers $m_{i}$ such that $(f')^{m_{i}}\circ f_{i}'$ preserves $q'$.
By appealing to the singular Euclidean structure with respect to $q'$, it can readily be seen that $(f')^{m_{i}}\circ f_{i}'$ is of finite order.
But if $(f')^{m_{i}}\circ f_{i}'$ is non-trivial, it contradicts the minimality of $q'$.
Hence $f'$ is a root of both $f_{1}'$ and $f_{2}'$.
Therefore $f'$ is covered by both $f_{1}$ and $f_{2}$.
Similarly to the proof of Theorem \ref{thm.main}, after finite steps, we end up with a unique minimal element.
\end{proof}
\section*{Acknowledgements}
The author would like to thank Michihiko Fujii for the invitation to the conference.
He also thanks Hideki Miyachi for helpful conversations.
Finally he thanks anonymous referee for useful comments.


\end{document}